%
%
\documentclass[imslayout,noinfoline,a4paper,12pt,twoside]{imsart}
\RequirePackage[OT1]{fontenc} 
\RequirePackage{amsthm,amsmath,amssymb,natbib} 


\startlocaldefs
\theoremstyle{plain}
\newtheorem{thm}{Theorem}[section]

\newtheorem{lem}{Lemma}[section]
\endlocaldefs
\let\oldsqrt\sqrt
\def\sqrt{\mathpalette\DHLhksqrt}
\def\DHLhksqrt#1#2{%
\setbox0=\hbox{$#1\oldsqrt{#2\,}$}\dimen0=\ht0
\advance\dimen0-0.2\ht0
\setbox2=\hbox{\vrule height\ht0 depth -\dimen0}%
{\box0\lower0.4pt\box2}}
\renewcommand{\[}{\begin{eqnarray*}}
\renewcommand{\]}{\end{eqnarray*}}
\newcommand{\la}{\begin{eqnarray}}
\newcommand{\al}{\end{eqnarray}}

\renewcommand{\epsilon}{\varepsilon}
\renewcommand{\phi}{\varphi}


\newcommand{\N}{{\mathbb N}}
\newcommand{\R}{{\mathbb R}}

\newcommand{\cA}{{\mathcal A}}
\newcommand{\cX}{{\mathcal X}}

\renewcommand{\P}{{\mathbb P}}\newcommand{\E}{{\mathbb E}}  

\renewcommand{\d}{{\,\text{\rm d}}}


                     %
\newcommand{\AnnMS}{{\it Ann.\ Math.\ Statist. }}               %
\newcommand{\AnnStat}{{\it Ann. Statist. }}                     %
\newcommand{\TAMS}{{\it Trans.\ Amer.\ Math.\ Soc. }}           %
                 %

\newcommand{\Halmos}{$\quad\square$}

\begin{document}

\begin{frontmatter}


{\bf \large One optional observation inflates $\alpha$}\\
{\bf\large by $100/\sqrt{n}$   per cent}

\runtitle{Optional observations}

\begin{aug}
\author{\fnms{Lutz} \snm{Mattner}
\ead[label=e1]{mattner@uni-trier.de}}

\runauthor{Lutz Mattner}

\affiliation{Universit\"at Trier} 
 {\footnotesize\tt \jobname.tex}

\address{Universit\"at Trier\\
Fachbereich IV -  Mathematik\\ 
54286 Trier \\
Germany\\
\printead{e1}}


\end{aug}

\begin{abstract}
For one-sample level $\alpha$ tests $\psi_m$
based on independent observations $X_1,\ldots,X_m$, 
we prove an asymptotic formula  for the actual  level
of the  test rejecting if at least one of the  tests
$\psi_{n},\ldots,\psi_{n+k}$  would reject.  
For $k=1$ and usual tests at usual levels $\alpha$, the result
is approximately summarized  by the title of this paper.

Our method of proof, relying on some second order asymptotic statistics
as developed by Pfanzagl and Wefelmeyer, might also be useful for
proper sequential analysis. A simple and elementary alternative
proof is given for $k=1$ in the special case of the Gauss test. 
\end{abstract}

\begin{keyword}[class=AMS]
\kwd[Primary ]{62F05}
\kwd[; secondary ]{62E20}\kwd{62L10}.
\end{keyword}
\begin{keyword}
 \kwd{Abuse of statistical tests}
 \kwd{level inflation} \kwd{multiple testing} \kwd{optional stopping} 
 \kwd{sequential test}
 \kwd{statistics with stochastic expansion}
 \kwd{teaching of statistics}.
\end{keyword}

\end{frontmatter}



\section{Main result and examples} 
\subsection{Introduction and main result}

For a given one-sample testing problem and for every sample 
size $m$, let $\psi_m$ be a test of level~$\alpha$, based on the $m$ 
independent observations $X_1,\ldots,X_m$.
Suppose that initially $n$ observation were planned, but that these
do not lead to the desired rejection of the hypothesis. Then 
some experimenters might be tempted to collect up to $k$ further observations
$X_{n+1},\ldots,X_{n+k}$, calculating after each 
the test based on the accumulated observations,
and to declare in effect a rejection of the hypothesis at level $\alpha$ if 
$\psi_m =1$ for some $m\in \{n,\ldots,n+k\}$. 
This would of course be wrong, but by how much?  
Surprisingly this question, known in the statistical literature at 
least since the publications  of Feller  \cite{Feller.1940} 
and Robbins \cite{Robbins.1952},  is usually not addressed in 
textbooks or treatises of statistics, see Subsection~\ref{Remarks} below.

The title of the present paper gives
a somewhat rough but easy to grasp answer for the simplest
case of $k=1$, approximately valid for common values of $\alpha$ and rather
general one-sample tests based on asymptotically normal
test statistics. Theorem \ref{Main result} below
gives a mathematically precise answer also for general $k$.   
We may summarize its statistical meaning as follows: 
Even an apparently slight amount of        
optional stopping will usually inflate the nominal level 
of a test by a serious amount, such as by about 10 per cent for $n=100$
and $k=1$.

In our formulation of 
Theorem~\ref {Main result}, we think of non-randomized tests $\psi_m$ 
based on upper test statistics  $T_m$  with critical value zero,
that is, $\psi_m = {(T_m>0)}$, using the indicator notation 
$(\text{statement}) := 1\text{ or }0$ according to whether
``statement'' is true or false.
Thinking only of tests actually exhausting a given level $\alpha$,
we essentially assume that this level is attained for at least one 
distribution from the hypothesis, simultaneously for all sufficiently large
sample sizes.  Theorem~\ref {Main result} refers to such a distribution,
compare assumption~\eqref{Tn as level alpha}   below, where the above
qualifier ``essentially'' has been made precise as 
``$\mbox{}+o(1/\sqrt{n})$''.   
Unfortunately this assumption already excludes lattice cases like 
the binomial tests, for which any analogue of   Theorem~\ref {Main result}
would presumably look  more complicated.
Now the test rejecting  if at least one of the tests $\psi_n,\ldots,\psi_{n+k}$ 
would  reject is   $(\max_{m=n}^{n+k} T_m >0)$, and hence, 
with respect to a given distribution of $X_1$, its probability of rejecting 
is   $\alpha_{n,k}$ as defined in \eqref{Def alpha n k} below.
Our regularity assumptions 
\eqref{StochasticExpansion}--\eqref{Rm strongly small} on the sequence $(T_n)$ 
are similar to those imposed by Pfanzagl and Wefelmeyer 
in their well-known treatise of second order asymptotic statistics,
see in particular  \cite[Section~10.3]{PW.II},
on which our result
is based. In Subsection~\ref{Remark on assumptions} 
below we comment on some minor differences between these  
assumptions. Let $\Phi$  and $\phi = \Phi'$
denote distribution function and density
of the standard normal distribution  $\mathrm{N}^{}_{0,1}$, and let us put 
\[
 h(\alpha) &:=& \frac
 {\phi\big(\Phi_{}^{-1}(1-\alpha)\big)}{\alpha\sqrt{2\pi}}
 \qquad\quad (\alpha \in \,]0,1[) 
\]
We write $A^2_{\neq}:= \{(x,y)\in A^2\,:\,x\neq y\}$ for any set $A$
and $x_+ := x\vee 0 = \max\{x,0\} = (-x)_-$ for $x\in \R$. 
Proofs of Theorem \ref{Main result} and Lemmas \ref{Lemma on h} 
and \ref{Lemma on Rn}
are given in Section \ref{Proofs}, see 
Subsections~\ref{Main proof.}, \ref{Proof on h}, and \ref{Proof on Rn}.
We point out that Example~\ref{ExGauss} below
contains an elementary direct proof, suitable for inclusion 
in standard statistics courses, of 
Theorem~\ref{Main result} in its simplest special case of 
the Gauss test with one optional observation, that is, $k=1$.

\begin{thm} \label{Main result}
Let $\cX$ be a measurable space, $(X_n)_{n\in\N}$  a sequence
of independent and identically distributed $\cX$-valued random variables,
$\alpha \in {]0,1[}$, and $(t_n)_{n\in\N}$ a sequence of measurable
functions $t_n:\cX^n \rightarrow \R $ such that the random variables
\[
  T_n &:=& t_n(X_1,\ldots,X_n) \qquad\quad (n\in\N)
\]  
satisfy  
\la                    \label{Tn as level alpha}
  \P (T_n>0) &=& \alpha + o(\frac 1{\sqrt{n}}) 
                 \qquad\quad (n\rightarrow\infty)
\al
Let  
\la   \label{Def alpha n k}
  \alpha_{n,k}  \,:=\,  \P (\max_{m=n}^{n+k} T_m>0),&&
  \rho_{n,k}  \,:=\, \frac{\alpha_{n,k}}\alpha -1  
          \qquad\quad (n,k\in \N)
\al
Assume that for $n\in \N$ 
\la  \label{StochasticExpansion}
  T_n  &=&  \phantom{+\,}\,
  \mu_0   + \frac 1{\sqrt{n}}\sum_{i=1}^n {f}_0(X_i)\\
\nonumber  && 
     +\, \frac 1{\sqrt{n}} \Big(
  \mu_1 +  \frac 1{\sqrt{n}}\sum_{i=1}^n {f}_1(X_i) 
       +\frac 1{2n} \sum_{(i,j)\in\{1,\ldots,n\}^2_{\neq}}f_2(X_i,X_j)\Big)\\
\nonumber  &&   +\,  R_n
\al  
for some constants $\mu_0,\mu_1\in\R$, measurable functions
$f_0,f_1:\cX\rightarrow \R$ and $f_2:\cX^2 \rightarrow \R$,
and a sequence $(R_n)_{n\in \N}$ of real-valued random variables
with
\la
 &&\E f_0(X_1) = \E f_1(X_1)=\E \big(f_2(X_1,X_2) {\pmb|}X_1  \big)=0,\,\, 
  f_2(X_1,X_2)=f_2(X_2,X_1)   \label{centeredness etc} \\
 &&  \E \big(f_0(X_1)\big)^2 = 1, \quad   \E|f_0(X_1)|^{3}_{} <\infty,\quad  
  \text{\rm $f_0(X_1)$ has a non-lattice law}   \label{Assumptions on f0}  \\
 &&  \E|f_1(X_1)|_{}^{3/2} <\infty, \quad 
  \E|f_2(X_1,X_2)|^{2+\delta}_{} <\infty \text{ \rm for some $\delta>0$} 
      \label{moments finite}\\
 &&  \label{SecondCondOnRn}
\text{\rm For every $\epsilon>0$: }\quad   \sup_{t\ge 1} t\,\P( |R_n|> \frac
 {t\epsilon}{\sqrt{n}}) = o(\frac 1{\sqrt{n}}) 
  \qquad\quad (n\rightarrow\infty)    \label{Rm strongly small} 
\al
Then
\la                  \label{MainResultII}
 \quad \rho_{n,k}
 &=& 
 \frac {h(\alpha)}{\sqrt{n}}\sqrt{2\pi} \sum_{\ell=1}^k\frac 1{\ell}
  \E \Big( \sum_{i=1}^\ell  {f_0(X_i)}\Big)_+
   \,+\, o\Big(\,\sqrt{\frac kn} \,\Big) 
\qquad (\frac kn\rightarrow 0)
\al
and
\la                 \label{MainResultIII}
 \rho_{n,k}
 &=&   2\, h(\alpha)\sqrt{\frac kn} \,+\, o\Big(\,\sqrt{\frac kn} \,\Big) 
\qquad\quad (\frac kn\rightarrow 0,\ k\rightarrow \infty)
\al
\end{thm}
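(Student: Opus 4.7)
My plan is to extract the leading constants first: from \eqref{StochasticExpansion} the leading term $\mu_0 + n^{-1/2}\sum_{i=1}^n f_0(X_i)$ is asymptotically $\mathrm{N}(\mu_0,1)$ (by \eqref{Assumptions on f0}), so \eqref{Tn as level alpha} forces $\mu_0 = -c$ with $c := \Phi^{-1}(1-\alpha)$. Setting $S_m := \sum_{i=1}^m f_0(X_i)$ and introducing the independent random walk $D_\ell := S_{n+\ell}-S_n$ ($\ell\ge 0$, $D_0=0$), I split
\begin{equation*}
  \alpha_{n,k} \;=\; \P(T_n>0) \;+\; \P\big(T_n\le 0,\ \max_{1\le\ell\le k}T_{n+\ell}>0\big),
\end{equation*}
so the first term contributes $\alpha+o(1/\sqrt n)$ by assumption and the analysis focuses on the second.

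Next I would expand the \emph{increments} of $(T_m)$. Subtracting \eqref{StochasticExpansion} at $n$ from the same at $n+\ell$ and Taylor-expanding $1/\sqrt{n+\ell}$, the only leading-order contribution is $D_\ell/\sqrt n$; the $\mu_1, f_1, f_2$ ingredients for sample sizes $n+\ell$ and $n$ differ only by amounts controlled by \eqref{centeredness etc}--\eqref{moments finite}, and the $R_m$ terms are handled via \eqref{Rm strongly small} together with Lemma \ref{Lemma on Rn}. I expect to arrive at
\begin{equation*}
 T_{n+\ell} \;=\; T_n \;+\; \frac{D_\ell}{\sqrt n} \;-\; \frac{c\,\ell}{2n} \;+\; E_{n,\ell},
\end{equation*}
with $\max_{1\le\ell\le k}|E_{n,\ell}|$ uniformly negligible on the scale $1/\sqrt n$.

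Parameterising $u := -\sqrt n\,T_n$, the relevant mass sits on the scale $\sqrt k=o(\sqrt n)$. Here Lemma \ref{Lemma on h} should provide the Edgeworth-type local density approximation $f_{T_n}(-u/\sqrt n) = \phi(c)+o(1)$, justified via the non-lattice hypothesis and the third moment in \eqref{Assumptions on f0}. Conditioning on $X_1,\dots,X_n$ (which makes $(D_\ell)$ independent of $T_n$), the event $T_{n+\ell}>0$ reduces essentially to $D_\ell > u$, and integrating in $u$ gives
\begin{equation*}
  \P(T_n\le 0,\ \max_\ell T_{n+\ell}>0) \;=\; \frac{\phi(c)}{\sqrt n}\,\E\big(\max_{1\le\ell\le k}D_\ell\big)_+ \;+\; o\!\big(\sqrt{k/n}\big),
\end{equation*}
with the boundary shift $c\ell/(2\sqrt n)$ and the Jacobian correction $\phi(c-u/\sqrt n)-\phi(c)$ each costing only $o(\sqrt{k/n})$ in the integral. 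Spitzer's identity applied to the i.i.d.\ random walk $(D_\ell)$ then evaluates $\E(\max_\ell D_\ell)_+ = \sum_{\ell=1}^k \ell^{-1}\E(D_\ell)_+$, and using $\phi(c)/\alpha = h(\alpha)\sqrt{2\pi}$ yields \eqref{MainResultII}.

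For \eqref{MainResultIII}, the Berry--Esseen estimate available from \eqref{Assumptions on f0} delivers $\E(D_\ell)_+ = \sqrt{\ell/(2\pi)}+o(\sqrt\ell)$, whence $\sum_{\ell=1}^k \ell^{-1}\E(D_\ell)_+ = 2\sqrt{k/(2\pi)}\,(1+o(1))$ and the claim drops out of \eqref{MainResultII}. The main obstacle will be the uniform error control when $k\to\infty$ with $k/n\to 0$: because the $u$-integration ranges over a window of width $\sim\sqrt k$ rather than being localised at a single point, both the local density expansion of $T_n$ and the increment bound on $E_{n,\ell}$ must be established uniformly over this enlarged region, demanding careful interplay between the Edgeworth-type expansion underlying Lemma \ref{Lemma on h}, the U-statistic contributions of $f_2$, and the remainder estimate of Lemma \ref{Lemma on Rn}.
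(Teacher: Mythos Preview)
Your overall strategy coincides with the paper's: reduce the increment $T_{n+\ell}-T_n$ to $D_\ell/\sqrt n$ (the paper packages this as its Main Lemma~\ref{Main Lemma}), exploit independence of $(D_\ell)$ from $T_n$, use an Edgeworth-type expansion for $T_n$ near $0$, invoke Spitzer/Kac for $\E(\max_{1\le\ell\le k} D_\ell)_+=\sum_{\ell=1}^k\ell^{-1}\E(D_\ell)_+$, and derive \eqref{MainResultIII} from \eqref{MainResultII} via the CLT asymptotic $\E(D_\ell)_+\sim\sqrt{\ell/(2\pi)}$. The packaging differs slightly---the paper writes $1-\alpha_{n,k}=\int\P(T_n\le y)\,\d Q_{n,k}(y)+o(\sqrt{k/n})$ with $Q_{n,k}$ the law of $-\max_\ell D_\ell/\sqrt n$ (up to an $\epsilon$-shift), and then Taylor-expands the \emph{distribution function} approximation $F_n(y)\approx\P(T_n\le y)$ around $y=0$---but this is in substance your conditioning-and-integrating step.

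There is, however, one genuine gap. You invoke a ``local density approximation $f_{T_n}(-u/\sqrt n)=\phi(c)+o(1)$,'' but under the non-lattice hypothesis in \eqref{Assumptions on f0} the statistic $T_n$ need not have a density at all; non-latticeness yields an Edgeworth expansion only for the \emph{distribution function}, and even for $n^{-1/2}S_n$ a density expansion would require Cram\'er's condition. Moreover, the expansion you need is for $T_n$ itself---including the $f_1$, $f_2$, $R_n$ contributions---not just for its leading part, and that is exactly what the paper imports from Pfanzagl--Wefelmeyer's Corollary~10.3.8: $\P(T_n\le y)=F_n(y)+o(1/\sqrt n)$ locally uniformly in $y$, with an explicit smooth $F_n$ satisfying $F_n(0)=1-\alpha$ and $F_n'(0)=\phi(c)(1+O(1/\sqrt n))$. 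Taylor-expanding $F_n$ (not a density) and controlling the second-order remainder via $\E Y_{n,k}^2=O(k/n)$ is how the paper closes the argument; once you make that substitution, your proof becomes the paper's. Two minor misattributions: Lemma~\ref{Lemma on h} is purely a computation about the scalar function $h(\alpha)$ and supplies no expansion for $T_n$; and Lemma~\ref{Lemma on Rn} is a sufficient criterion for \emph{verifying} hypothesis \eqref{Rm strongly small} in examples, not a tool used in the proof---there one applies \eqref{Rm strongly small} directly (with $t=\sqrt k$) to bound $\max_{n\le m\le n+k}|R_m-R_n|$.
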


For common levels $\alpha$, we have $h(\alpha)\approx 1$:
\begin{lem}\label{Lemma on h} 
The function $h$ is strictly decreasing with
the asymptotic behaviour 
\la          \label{asymp h}
  h(\alpha) &\sim&  \sqrt{ \frac 1\pi \log(\frac 1\alpha)}
 \qquad (\alpha \rightarrow 0)
\al 
and rounded values  
\[
\begin{array}{|c||c|c|c|c|c|c|c|c|c|c|}
\hline
\alpha   &0.05 & 0.025  &0.01 & 0.005 & 0.001  & 0.0005   \\
\hline
h(\alpha)&0.82 & 0.93 & 1.06 & 1.15 &  1.34&  1.42 \\
\hline
\end{array}
\]
\end{lem}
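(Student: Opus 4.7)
\textbf{Proof plan for Lemma \ref{Lemma on h}.} The plan is to substitute $q := \Phi^{-1}(1-\alpha)$, so that $q$ ranges bijectively over $\R$ as $\alpha$ ranges over $]0,1[$, with $\alpha = 1-\Phi(q)$ and hence $d\alpha/dq = -\phi(q)$. Rewriting $h(\alpha)\sqrt{2\pi} = \phi(q)/\alpha$ and taking the logarithmic derivative with respect to $\alpha$ gives $h'(\alpha)/h(\alpha) = q/\phi(q) - 1/(1-\Phi(q))$, which has the same sign as $q(1-\Phi(q))-\phi(q)$. Strict monotonicity of $h$ thus reduces to the strict Mills inequality $q(1-\Phi(q))<\phi(q)$, which is trivial for $q\le 0$ since the left side is nonpositive while $\phi(q)>0$, and which for $q>0$ is immediate from the identity $1-\Phi(q) = \phi(q)/q - \int_q^\infty \phi(x)/x^2\,dx$ obtained by integration by parts, which in fact exhibits the positive difference $\phi(q)-q(1-\Phi(q))$ explicitly.

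For the asymptotics \eqref{asymp h}, I would invoke the standard Mills ratio expansion $1-\Phi(q) = (\phi(q)/q)(1 + O(1/q^2))$ as $q\to\infty$. Under $\alpha = 1-\Phi(q)\to 0$ this translates to $\alpha\sim \phi(q)/q$ and hence $h(\alpha) = \phi(q)/(\alpha\sqrt{2\pi}) \sim q/\sqrt{2\pi}$. Taking logarithms in $\alpha \sim \phi(q)/q$ yields $\log(1/\alpha) = q^2/2 + \log q + \tfrac12\log(2\pi) + o(1)$, so $q \sim \sqrt{2\log(1/\alpha)}$, and dividing by $\sqrt{2\pi}$ recovers $h(\alpha)\sim\sqrt{\log(1/\alpha)/\pi}$ as claimed.

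The tabulated values then amount to a numerical evaluation of $\phi(\Phi^{-1}(1-\alpha))/(\alpha\sqrt{2\pi})$ at the six listed $\alpha$, using any standard table of the normal quantile and density. None of the three steps presents a genuine obstacle; the only moderately delicate point is securing the strict Mills bound uniformly in $q>0$, and this is handled cleanly by the integration-by-parts identity above, which makes the strict positivity of $\phi(q)-q(1-\Phi(q))$ manifest.
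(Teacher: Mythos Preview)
Your proposal is correct. For the asymptotics \eqref{asymp h} you and the paper argue identically: both use the Mills ratio asymptotic $(1-\Phi(q))\sim\phi(q)/q$ together with $\Phi^{-1}(1-\alpha)\sim\sqrt{2\log(1/\alpha)}$, and the tabulated values are of course just numerical.

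For strict monotonicity the paper takes a different and somewhat slicker route. Rather than differentiating, it observes that
\[
 h \,=\, \frac{1}{\sqrt{2\pi}}\cdot
 \Big(]0,\infty[\ni x\mapsto \tfrac1x\Big)\circ
 \frac{1-\Phi}{\phi}\circ
 \Big(]0,1[\ni\alpha\mapsto\Phi^{-1}(1-\alpha)\Big)
\]
is a composition of three strictly decreasing maps, the middle one being Mills' ratio, whose monotonicity is read off directly from the integral representation
$\frac{1-\Phi}{\phi}(x)=\int_0^\infty\exp(-xt-t^2/2)\,dt$. This avoids any calculus with $h$ itself. Your approach instead computes $h'/h$ and reduces to the pointwise Mills inequality $q(1-\Phi(q))<\phi(q)$, which you then verify by integration by parts. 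Both arguments are short and standard; the paper's buys a one-line proof with no derivative computation, while yours makes the connection to the classical Mills bound explicit and yields the quantitative identity $\phi(q)-q(1-\Phi(q))=q\int_q^\infty\phi(x)/x^2\,dx$ as a by-product.
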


Taking  $k=1$ in \eqref{MainResultII}, we get 
\la
 \rho_{n,1}&\sim&\frac{h(\alpha)}{\sqrt{n}}\sqrt{2\pi}\,
 \E\big({f_0(X_1)}\big)_+ \qquad \quad(n\rightarrow\infty)
\al 
So, assuming $h(\alpha)\approx 1$, the claim in the title of this paper
approximately results when $\sqrt{2\pi}\, \E\big({f_0(X_1)}\big)_+ \approx 1$
and $n$ is sufficiently large. For the Gauss and $t$-tests 
in Examples~\ref{ExGauss} and \ref{Ex t test} below,  
we have $\sqrt{2\pi}\, \E\big({f_0(X_1)}\big)_+=1$ exactly.
In theses two cases, from \eqref{MainResultIIGauss} below, 
one optional observation inflates $\alpha$ by $h(\alpha)100/\sqrt{n}$
per cent,  two optional observation inflate $\alpha$ by 
$h(\alpha)171/\sqrt{n}$ per cent, etc.
For general examples we note, using 
\eqref{centeredness etc}, \eqref{Assumptions on f0} and 
$\E |Y|\le ( \E Y^2 )^{1/2}$, 
that  $\sqrt{2\pi}\, \E\big({f_0(X_1)}\big)_+$ can be any strictly 
positive number $\le\, \sqrt{2\pi}\frac 12  
\big( \E\big({f_0(X_1)}\big)^2\big)^{1/2} = \sqrt{\pi/2}$,
so the accuracy of the 
claim in the title depends on   $\sqrt{2\pi}\, \E\big({f_0(X_1)}\big)_+$
being not too far from its value under $f_0(X_1)\sim \mathrm{N}_{0,1}$.
In the exponential Example \ref{ExampleExp}, we have   
$\sqrt{2\pi}\, \E\big({f_0(X_1)}\big)_+ = \sqrt{2\pi}/\mathrm{e}
= 0.92$,  so that in this case one optional observation inflates $\alpha$ by merely 
$h(\alpha)92/\sqrt{n}$ per cent. 

Many test sequences $(\psi_n)$ in the literature can be written in the
form $\psi_n = (T_n>0)$ with $(T_n)$ admitting an expansion 
as in Theorem~\ref{Main result}. This is in particular true,
under appropriate regularity conditions, for one-sided tests
based on one-dimensional components of minimum contrast
estimators, see \cite[pp.~395-396, Theorem 11.3.4]{PW.II}
for a precise statement and references.
In our examples in Subsection~\ref{Example} below  we can easily 
check all assumptions rather directly. 

We have to note here that our assumption \eqref{SecondCondOnRn}
on the sequence of remainders $(R_n)$ is 
slightly stronger than Pfanzagl and Wefelmeyer's 
\la  \label{Rm weakly small}
&& \text{\rm For every $\epsilon>0$: }\quad  
    \P(|R_n| > \epsilon/\sqrt{n})=o(1/\sqrt{n})
 \qquad\quad (n\rightarrow\infty)   
\al
Condition  \eqref{SecondCondOnRn} 
appears to be just about what is needed in the proof 
of our crucial Lemma \ref{Main Lemma}  below, since we  
allow $k$ to be unbounded, see \eqref{Bounding U_5} below.
For  bounded $k$,  assumption  \eqref{Rm weakly small}
would suffice. Condition  \eqref{SecondCondOnRn} should be easy to establish
in any reasonable case, and we do this in 
Example~\ref{Ex t test} below by using the following simple fact.

\begin{lem}\label{Lemma on Rn}
Let $(R_n)_{n\in\N}$ be a sequence of real-valued random variables
such that for some $p\in[1,\infty[$ and $n_0\in\N$ the random variables 
\la                                       \label{Def Yn}
 Y_n &:=& |n^{\frac {1+p}{2p}} R_n|^p 
\al
with $n\ge n_0$ are uniformly integrable.
Then \eqref{SecondCondOnRn} holds.
\end{lem}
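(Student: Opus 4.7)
\textbf{Proof plan for Lemma \ref{Lemma on Rn}.} The plan is to translate the tail event for $R_n$ into one about $Y_n$ via the definition \eqref{Def Yn}, apply a truncated form of Markov's inequality, and then close using the uniform integrability hypothesis. The whole argument is short; the only point that requires a moment's thought is the simultaneous handling of the supremum over $t$.

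First, a direct exponent computation using $(1+p)/(2p) - 1/2 = 1/(2p)$ shows that
$$\{|R_n| > t\epsilon/\sqrt{n}\} \;=\; \{n^{(1+p)/(2p)}|R_n| > t\epsilon\,n^{1/(2p)}\} \;=\; \{Y_n > (t\epsilon)^p\sqrt{n}\},$$
where the last equality follows on raising both sides to the $p$-th power. Applying the refined Markov bound $\P(Y_n>s) \le s^{-1}\E[Y_n(Y_n>s)]$ with $s = (t\epsilon)^p\sqrt{n}$ yields
$$t\,\P(|R_n|>t\epsilon/\sqrt{n}) \;\le\; \frac{1}{\epsilon^p\,t^{p-1}\sqrt{n}}\,\E\bigl[Y_n\,(Y_n>(t\epsilon)^p\sqrt{n})\bigr]$$
for every $t\ge 1$ and $n\ge n_0$.

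The key observation is now that, since $p\ge 1$ and $t\ge 1$, both $t$-dependent factors on the right are worst at $t=1$: the prefactor $1/(\epsilon^p t^{p-1})$ is non-increasing in $t$, and the truncation level $(t\epsilon)^p\sqrt{n}$ is non-decreasing in $t$, so the expectation is largest at $t=1$. Taking the supremum therefore reduces to
$$\sup_{t\ge 1}\,t\,\P(|R_n|>t\epsilon/\sqrt{n}) \;\le\; \frac{1}{\epsilon^p\sqrt{n}}\,\E\bigl[Y_n\,(Y_n>\epsilon^p\sqrt{n})\bigr].$$
Because $\epsilon^p\sqrt{n}\to\infty$, the uniform integrability of $(Y_n)_{n\ge n_0}$ forces the expectation to tend to $0$, which makes the right-hand side $o(1/\sqrt{n})$, giving \eqref{SecondCondOnRn}. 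The only obstacle worth naming is the need for uniform integrability rather than mere $L^1$-boundedness: it is precisely what allows the truncation level to be pushed to infinity inside the expectation, and without it the final step would fail.
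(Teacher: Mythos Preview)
Your proof is correct and follows essentially the same route as the paper's: both translate the event $\{|R_n|>t\epsilon/\sqrt{n}\}$ into $\{Y_n>(t\epsilon)^p\sqrt{n}\}$, apply the truncated Markov bound, and then use $t^{1-p}\le 1$ together with monotonicity of the truncation level to reduce to $t=1$ before invoking uniform integrability. The only cosmetic difference is that the paper applies Markov to $|R_n|^p$ and then rewrites in terms of $Y_n$, whereas you first pass to $Y_n$ and then apply Markov; the inequalities obtained are identical.
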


For a discussion of further minor differences  between 
Pfanzagl and Wefelmeyer's  and our assumptions on $(T_n)$ see 
Subsection~\ref{Remark on assumptions} below.

\subsection{Various remarks} \label{Remarks} 
Reading this subsection is  not logically necessary for understanding the 
rest of this paper. 

Under the assumptions of Theorem \ref{Main result},
one can easily show that  $T_n-\mu_0 - (1/\sqrt{n})\sum_{i=1}^n f_0(X_i)$
converges to zero in probability, so that 
$T_n-\mu_0 $ converges in law  to $\mathrm{N}_{0,1}$, and hence, in view
of \eqref{Tn as level alpha}, we must have
\la
  \mu_0 &=& -\Phi^{-1}(1-\alpha)
\al 
The result of \cite[Corollary 10.3.8]{PW.II},
on which our proof of Theorem~\ref{Main result}
will be based below, further includes a formula for $\mu_1$
in terms of $\alpha,f_0,f_1,f_2$ and the law of $X_1$.

The expectations occuring in formula \eqref{MainResultII}
can be computed explicitly in some cases, see in particular 
Example~\ref{ExampleExp} below and, more generally,
\cite{Diac.Zab.1991}. We always have 
$ \E (\sum_{i=1}^\ell f_0(X_i))_+ \sim \sqrt{\frac {\ell}{2\pi}}$ 
for $\ell\rightarrow \infty$, see the end of the proof of 
Theorem \ref{Main result}, and 
$   \E (\sum_{i=1}^\ell f_0(X_i))_+ \ge \sqrt{\frac {\ell}{2}} \,
\E(f_0(X_1))_+$ by \cite[Corollary 1.3]{Mattner.2003}.

Relation \eqref{MainResultIII}
becomes false  if the condition   ``$k/n\rightarrow 0$''
is replaced by ``$k/n$ bounded'', since for $k/n$ constant and sufficiently
large  a contradiction to $\alpha_{n,k}\le 1$ would result.

As mentioned 
above, the problem of level inflation due to optional stopping is
usually  not addressed   in textbooks or  treatises of statistics.
It was raised, perhaps for the first time in the literature, 
by Feller in 1940
in connection with apparently ill-conducted experiments concerning 
``extra-sensory perception'', see \cite[pp.~286-294]{Feller.1940}
and references therein. Robbins \cite[pp.~534-535]{Robbins.1952}
posed the problem of evaluating or bounding 
what we have called $\alpha_{n,k}$, and stated without proof  
a bound in the case of the Gauss test. We are not aware 
of a continuation of that part of Robbins' work.
Diaconis \cite{Diaconis.1978} comments critically on  
Feller's paper, but not on the particular point of optional stopping.
Among books known to the present author, 
Pfanzagl's \cite[p.~127)]{Pfanzagl.1994}
is unique in stressing and demonstrating
the problem, albeit only by  a simulation, and unfortunately
obscured by the additional deliberate mistake 
of choosing between two valid test for each sample size.
To our surprise, we did not find any statistical textbook treating 
the problem more systematically.

Theorem~\ref{Main result}  can be read as addressing 
an improper sequential analysis.
Its technical basis however, namely  
the consideration of statistics  with stochastic expansion,
Pfanzagl and Wefelmeyer's result on their asymptotic distributions, 
and the crucial  Lemma~\ref{Main Lemma} below, 
might be useful for proper sequential analysis as well.
For example, under the assumptions of Theorem~\ref{Main result}
but with condition \eqref{Tn as level alpha} omitted,
we can generalize \eqref{MainResultII} to a computation 
of the asymptotic distribution of $\max_{m=n}^{n+k} T_m$ 
up to an error $o(\sqrt{k/n})$ for $k/n\rightarrow 0$,
by using \cite[Proposition 10.3.1]{PW.II}
rather than \cite[Corollary 10.3.8]{PW.II}
in a modification of the present Proof~\ref{Main proof.}.

\subsection{Examples} \label{Example}
In each case below, let $\alpha\in{]0,1[}$ be  the level of the tests
considered.  
\subsubsection{The Gauss test} \label{ExGauss}
The Gauss test for testing  $\mu\le \mu_0$ based
on i.i.d.\ normal $X_1,\ldots,X_n$ with unknown mean $\mu \in \R$
and known standard deviation  $\sigma_0\in{]0,\infty[} $ rejects iff
\[
  T_n &:=& - \Phi^{-1}(1-\alpha) + \frac1{\sqrt{n}} 
 \sum_{i=1}^n\frac{X_i-\mu_0}{\sigma_0} \,\, >\,\,0
\] 
Hence Theorem~\ref{Main result} is applicable, with 
$X_i \sim \mathrm{N}_{\mu_0,\sigma_0^2}$,
$f_0(x) := (x-\mu^{}_0)/\sigma_0$, 
and vanishing $\mu_1$, $f_1$, $f_2$, and $R_n$,
and  \eqref{MainResultII} reads 
\la        \label{MainResultIIGauss}
 \rho_{n,k} & =& \frac{h(\alpha)}{\sqrt{n}} \sum_{\ell=1}^k\frac
 1{\sqrt{\ell}}  \,+\, o\Big(\,\sqrt{\frac kn} \,\Big) 
\qquad \quad(\frac kn\rightarrow 0)
\al

Here is the elementary proof of \eqref{MainResultIIGauss}
for the simplest  case of $k=1$ promised 
immediately before the statement of Theorem~\ref{Main result}:
With $Y_i := (X_i-\mu_0)/\sigma_0$,
$Z_n := \frac 1{\sqrt{n}}\sum_{i=1}^n Y_i $
and $z:=\Phi^{-1}_{}(1-\alpha)$, we have
\[
 \alpha_{n,1}-\alpha 
 &=&\nonumber \P(Z_n>z\text{ or }Z_{n+1}>z)-\P(Z_n>z)  \\
 &=&\nonumber \P(Z_n\le z,\, Z_{n+1}>z) \\
 &=&\nonumber\P(Z_n\le z,\, Y_{n+1}>\sqrt{n+1}z  - \sqrt{n}Z_n) \\
 &=& \int_{-\infty}^z \big(1-\Phi(\sqrt{n+1}z-\sqrt{n}t) \big)
     \phi(t)\d t
\]
since $Z_n$ and $Y_{n+1}$ are independent 
and $\mathrm{N}^{}_{0,1}$-distributed. Hence, using the change of variables
$t\mapsto z- \frac {t}{\sqrt{n}}$, we get 
\[
\sqrt{n} \,\big(\alpha_{n,1} -\alpha \big) 
 &=&\nonumber\int_0^\infty\Big(1-\Phi\big((\sqrt{n+1}-\sqrt{n})z +t\big) \Big)
     \phi(z- \frac{t}{\sqrt{n}})\d t \\
 &\underset{(n\rightarrow\infty)}{\longrightarrow}&
  \int_0^\infty \big(1-\Phi(t)\big) \phi(z) \d t  \\
 &=&\nonumber\phi(z) \int_0^\infty t\phi(t) \d t  \\
&=&\nonumber \frac {\phi(z)}{\sqrt{2\pi}}
\]
by dominated convergence with the integrands dominated by
the function $t\mapsto \big(1-\Phi(t-z_-)\big)\phi(0)$.

\subsubsection{Testing an exponential mean}\label{ExampleExp}
The usual optimal test for $\lambda \ge \lambda_0$ based on 
i.i.d.\ exponential $X_1,\ldots,X_n$ with density
${]0,\infty[} \ni x\mapsto \lambda\mathrm{e}^{-\lambda x}$ with
$\lambda \in{]0,\infty[}$ unknown
rejects for large values of $\sum_{i=1}^n X_i$, namely iff
\[
  T_n &:=& - F^{-1}_{P_n}(1-\alpha) 
   + \frac1{\sqrt{n}}\sum_{i=1}^n(\lambda_0X_i-1)\,\,>\,\,0
\] 
where $P_n$ denotes the law of the standardization of $\sum_{i=1}^n X_i$
under $\lambda = \lambda_0$
and $F^{-1}_{P_n}$ the corresponding quantile function. Since   
$P_n$  admits an Edgeworth expansion with remainder $o(1/\sqrt{n})$,
see e.g.\ \cite[p.~174, Theorem 5.22]{Petrov.1995},
we have 
\[
 T_n &=& -\Phi^{-1}(1-\alpha) + \frac1{\sqrt{n}}\sum_{i=1}^nf_0(X_i)
     + \frac{ \mu_1}{\sqrt{n}}    + R_n
\]
where $f_0(x)=\lambda_0 x-1$, $\mu_1\in \R$ depends only on $\alpha$, 
and  where $R_n$ is deterministic and $o(1/\sqrt{n})$ for $n\rightarrow\infty$.
Hence  the assumptions of Theorem~\ref{Main result} are fulfilled.
With $\gamma_a(x) := \big(\Gamma(a) \big)^{-1} x^{a-1}\mathrm{e}^{-x}$
we have $\int_x^\infty (t-a)\gamma_{a}(t)\d t =a\,\gamma_{a+1}(x)$ 
for $a,x\in{]0,\infty[}$, by differentiation with respect to $x$
and considering $x\rightarrow\infty$, so that
\[
 \E\Big( \sum_{i=1}^\ell  {f_0(X_i)}\Big)_+
 &=& \int_\ell^\infty (t-\ell)\gamma_\ell(t)\d t 
 \,\,=\,\,\Big( \frac{\ell}{\mathrm{e}}\Big)^{\ell}_{}
 \frac 1{(\ell-1)!}  
\] 
and accordingly \eqref{MainResultII} reads
\la \qquad
 \rho_{n,k} & =& \frac{h(\alpha)}{\sqrt{n}} 
   \frac {\sqrt{2\pi}}{\mathrm{e}} \sum_{\ell=1}^k 
       \big(\frac \ell {\mathrm{e}}\big)_{}^{\ell-1} \frac 1{(\ell-1)!}   
 \,+\, o\Big(\,\sqrt{\frac kn} \,\Big) \qquad \quad(\frac kn\rightarrow 0)
\al

\subsubsection{The $t$-test} \label{Ex t test}
The $t$-test for $\mu\le \mu_0$ based  
on i.i.d.\ normal $X_1,\ldots,X_n$ with unknown mean $\mu \in \R$
and unknown standard deviation  $\sigma\in{]0,\infty[} $ rejects 
for $n\ge 2$ iff
\[
  T_n &:=& \frac1{\sqrt{n}}\sum_{i=1}^n Y_i - 
     c_n\,\sqrt{\frac1{n-1}\sum_{i=1}^n(Y_i-\overline{Y}_n)_{}^2} 
  \,\,>\,\,0
\]
with $Y_i:=(X_i-\mu_0)/\sigma_0$ with $\sigma_0\in{]0,\infty[}$
arbitrary, $ \overline{Y}_n=\frac 1n\sum_{i=1}^n Y_i$,
and $c_n$ denoting the $(1-\alpha)$-quantile of the 
$t$-distribution with $n-1$ degrees of freedom. 
Assuming $X_i\sim\mathrm{N}_{\mu_0,\sigma_0^2}$ from now on, 
the $Y_i$ are standard normal.
We  write  $ \overline{Y^2_n} := \frac 1n \sum_{i=1}^n Y_i^2$ 
and 
$S_n^2:= \frac 1n \sum_{i=1}^n (Y_i-\overline{Y}_n)^2 
 =  \overline{Y^2_n} - \overline{Y}_n^2$ 
and use $ c_n = \Phi^{-1}(1-\alpha) + O(\frac 1n)$
e.g.\ from \cite[p.~461, (11.75)]{Lehmann.Romano.2005},
$\sqrt{\frac{n}{n-1}}=1 + O(\frac 1n)$ for $n\in \N$ with $n\ge 2$,
and $\sqrt{x} = 1+ (x-1)/2 + O((x-1)^2)$ for $x\in [0,\infty[$, 
to get   
\[
 T_n &=& \frac1{\sqrt{n}}\sum_{i=1}^n Y_i \,-\,  
  c_n \sqrt{\frac{n}{n-1}} \sqrt{S_n^2   }    \\
&=&\frac1{\sqrt{n}}\sum_{i=1}^n Y_i  \\
 && \, -\, \Big( \Phi^{-1}(1-\alpha) + O(\frac 1n) \Big)
    \Big(1+\frac12 (\overline{Y^2_n} -1) - \frac12\overline{Y}_n^2
      + O\big( (S_n^2-1)^2 \big) \Big)  \\
    &=&  - \Phi^{-1}(1-\alpha) \,+\, \frac1{\sqrt{n}}\sum_{i=1}^n Y_i
      \,-\,\frac{ \Phi^{-1}(1-\alpha) }{2n}\sum_{i=1}^n (Y_i^2 -1)\,+\,R_n      
\]
where the sequence $(R_n)$ satisfies  \eqref{SecondCondOnRn}
by Lemma \ref{Lemma on Rn} with $p=1$,
since $nR_n$ is a linear combination with bounded 
coefficients of the four random variables $1$,
$\frac 1n\sum_{i=1}^n (Y_i^2 -1)$,  $n \overline{Y}_n^2$,  and $n(S_n^2-1)^2$,
which are uniformly integrable, as may  be verified 
by checking that  their second moments are bounded. 
Hence the assumptions of Theorem \ref{Main result} 
are fulfilled, with $f_0(x) = (x-\mu_0)/\sigma_0$,
$\mu_1=0$, $f_1(x)= - \Phi^{-1}(1-\alpha)( ((x-\mu_0)/\sigma_0)^2-1)$, 
and $f_2=0$,  and we get  the same asymptotic formula 
\eqref{MainResultIIGauss} as in the Gauss case.

\section{Auxiliary results and proofs} \label{Proofs} 
In this section we use Pfanzagl and Wefelmeyer's \cite[p.~16]{PW.II}
$\epsilon^{}_\P$-notation:
For real-valued random variables $X_n$ on  probability spaces
$(\Omega_n,\cA_n,\P_n)$ and numbers $\delta_n >0$, we write
\[
 && X_n \,=\,\epsilon^{}_{\P_n}(\delta_n) \qquad(n\rightarrow\infty)\\
  &:\iff& \forall \epsilon >0 \quad \P_n(|X_n|\ge \epsilon)\,=\,o(\delta_n) 
  \qquad(n\rightarrow\infty)
\]  
Here $n$ can belong to any index set if ``$n\rightarrow\infty$''
is replaced by the specification of some appropriate passage to the limit,
formally by a filter or a net. 
In our case  the index is actually $(n,k)\in\N^2$,
but $\P_{n,k}$ is for notational convenience
chosen to be independent of $(n,k)$, say 
an infinite product measure,
so that $\epsilon^{}_{\P_{n,k}}$ becomes $\epsilon^{}_{\P}$. 
The three successively more specialized 
passages  to the limit we use are 
``$n\rightarrow \infty$'',
``$k/n \rightarrow 0$'', and 
``$k/n \rightarrow 0, k\rightarrow \infty$''.

We begin with a comparison of our versus Pfanzagl and Wefelmeyer's
assumptions on the stochastic expansion
\eqref{StochasticExpansion}, then state and prove 
the  crucial  Lemma~\ref{Main Lemma}, and conclude by proving 
Theorem~\ref{Main result} and Lemmas~\ref{Lemma on h} and \ref{Lemma on Rn}.

\subsection{Discussion of the assumptions
\eqref{StochasticExpansion}, \eqref{Assumptions on f0}, 
and \eqref{SecondCondOnRn}}  \label{Remark on assumptions} 
Our assumptions on the sequence $(T_n)$ differ in three respects from those 
of \cite[p.~343, Corollary 10.3.8, $S_n= \mu(P) + T_n/\sqrt{n}$, 
the case $g_1=\overline{g}_1=0$]{PW.II}
used in the  proof of Theorem~\ref{Main result} below.

First,  to  simplify  the notation, we have added the normalizing assumption
$\E (f_0(X_1))^2 =1$.

Second, as already discussed above, we have \eqref{SecondCondOnRn} 
instead  of Pfanzagl and Wefelmeyer's ``$\sqrt{n}\,R_n =\epsilon_\P(1/\sqrt{n})$'', 
that is, \eqref{Rm weakly small}.

Third, instead of our  
$U_n :=\frac 1{2n}\sum_{(i,j)\in\{1,\ldots,n\}^2_{\neq}}f_2(X_i,X_j)$
in the stochastic expansion of $T_n$, in \cite{PW.II} we have
$V_n := \frac 1{2n}\sum_{(i,j)\in\{1,\ldots,n\}^2} f_2(X_i,X_j)$
and the additional assumption  $\E (f_2(X_1,X_1))^{3/2} < \infty$. 
Here our version is slightly more general,
since, under the moment condition just stated, we have 
$ V_n = U_n + \frac 12  \E f_2(X_1,X_1) +  \sqrt{n} \, R_n$,
where the present $R_n := n^{-3/2}\sum_{i=1 }^n \xi_i$
with $\xi_i := \frac 12(f_2(X_i,X_i) - \E f_2(X_1,X_1))$ 
also satisfies \eqref{SecondCondOnRn},
as follows via Lemma \ref{Lemma on Rn}
from the fact that for $p:=3/2$ the random 
variables  $Y_n$ defined by \eqref{Def Yn} 
are given by $Y_n =  |n^{-2/3} \sum_{i=1}^n \xi_i |_{}^{3/2} $
and hence are uniformly integrable by the Theorem of 
Pyke and Root \cite{Pyke.Root.1968}.
Hence, even under our more stringent condition  \eqref{SecondCondOnRn}
on the remainders, we may in 
the expansion from  \cite{PW.II} simultaneously
replace $V_n$ by $U_n$ and  $\mu_1$
by $\mu_1 + \frac 12  \E f_2(X_1,X_1)$.
Moreover, \cite[Proposition 10.3.1 and  Corollary 10.3.8]{PW.II}
remain true with $U_n$ in place of $V_n$ even if the assumption
 $\E (f_2(X_1,X_1))^{3/2} < \infty$ is omitted, 
since the latter is used in \cite{PW.II} only to replace $V_n$ by $U_n$ in 
the proof of \cite[Proposition  10.3.1]{PW.II} in order
to prepare for the application of the result 
of Bickel, G\"otze  and van Zwet 
\cite[Theorem~1.2]{BGvZ.1986}
and  G\"otze  \cite[Theorem~1.14]{Goetze.1987}
which refers to $U$-statistics rather than $V$-statistics. 
 
Finally let us note that our non-latticeness assumption 
in \eqref{Assumptions on f0} is the same as the one imposed in \cite{PW.II}
using the  confusing term ``strongly non-lattice''
necessary only for multivariate statistics  $T_n$, 
see \cite[pp.~207, 221, and 226]{BRR.1986}
\subsection{The main lemma}
The following Lemma~\ref{Main Lemma} is the crucial first
step in our proof of Theorem~\ref{Main result} in 
Subsection~\ref{Main proof.} below. 

\begin{lem} \label{Main Lemma}
Let $(T_n)_{n\in\N}$ be a sequence of real-valued random
variables such that for $n\in\N$ we have 
\eqref{StochasticExpansion}
for some constants $\mu_0,\mu_1\in\R$, 
a measurable space $\cX$,
a sequence $(X_n)_{n\in\N}$ of independent and identically distributed
$\cX$-valued random variables,
measurable functions $f_0,f_1:\cX\rightarrow \R$ and $f_2:\cX^2 \rightarrow \R$,
and a sequence $(R_n)_{n\in \N}$ of real-valued random variables
with \eqref{centeredness etc}, 
\la
&& \E|f_0(X_1)|^{2}_{} <\infty, \quad 
  \E|f_2(X_1,X_2)|^{\frac 32+\delta}_{} <\infty  
   \text{ \rm for some $\delta>0$} \label{lower moments finite}
\al
and \eqref{Rm strongly small}.
Then
\la             \label{Strong expansion}
 \quad \max_{m=n}^{n+k} \Big| T_m 
  -\big( T_n +\frac 1{\sqrt{n}} \sum_{i=n+1}^mf_0(X_i)\big) \Big|
      &=&  \sqrt{\frac kn} \epsilon^{}_{\P}\Big(\, \sqrt{\frac kn}\,\Big)  
  \qquad  (\frac kn\rightarrow 0)
\al
\end{lem}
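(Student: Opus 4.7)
The strategy is to expand both $T_m$ and $T_n$ via \eqref{StochasticExpansion}, cancel what cancels (in particular $\mu_0$) and regroup, writing
$T_m - T_n - \frac{1}{\sqrt{n}}\sum_{i=n+1}^m f_0(X_i)$ as the sum of the following pieces:
(i) $(\tfrac{1}{\sqrt{m}}-\tfrac{1}{\sqrt{n}})\sum_{i=1}^m f_0(X_i)$;
(ii) the deterministic $\mu_1(\tfrac{1}{\sqrt{m}}-\tfrac{1}{\sqrt{n}})$;
(iii) $(\tfrac{1}{m}-\tfrac{1}{n})\sum_{i=1}^n f_1(X_i)$ and (iv) $\tfrac{1}{m}\sum_{i=n+1}^m f_1(X_i)$;
(v) the $f_2$-driven $U$-statistic difference, further split using $U_m = U_n + 2V_{n,m} + W_{n,m}$ where $V_{n,m} = \sum_{i=n+1}^m\sum_{j=1}^n f_2(X_i,X_j)$ and $W_{n,m}$ is the $U$-statistic over $\{X_{n+1},\ldots,X_m\}$ only; and (vi) $R_m - R_n$. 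It suffices to show that the $\max_{m\in\{n,\ldots,n+k\}}$ of each of the six pieces is $\sqrt{k/n}\,\epsilon_\P(\sqrt{k/n})$.

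For (i), I would use the crude estimate $|1/\sqrt{m}-1/\sqrt{n}|\le k/(2n^{3/2})$ together with Doob's $L^2$-maximal inequality applied to the martingale $\ell\mapsto\sum_{i=1}^{n+\ell} f_0(X_i)$; the second moment is of order $(k/n)^2$, and Chebyshev then yields a probability bound of order $k/n = o(\sqrt{k/n})$. Piece (ii) is deterministic of order $k/n^{3/2}= \sqrt{k/n}/\sqrt{n}\cdot O(1)$. Pieces (iii) and (iv) are treated by the same template: bound the pre-$n$ factor in (iii) by Chebyshev with $\E|\sum_{1}^n f_1|^2=O(n)$, and apply Doob to the post-$n$ martingale in (iv). In piece (v), the degeneracy $\E(f_2(X_1,X_2)\mid X_1)=0$ combined with the symmetry of $f_2$ makes $\ell\mapsto V_{n,n+\ell}$ and $\ell\mapsto W_{n,n+\ell}$ martingales with respect to the natural filtration, so Doob's inequality again applies and gives $\E\max_\ell V^2=O(kn\,\E f_2^2)$ and $\E\max_\ell W^2=O(k^2\,\E f_2^2)$. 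After division by $(n+\ell)^{3/2}\asymp n^{3/2}$, Chebyshev yields probability bounds of order $1/n$ and $k/n^2$ respectively; the accompanying $U_n$-piece $U_n(\tfrac{1}{2m\sqrt{m}}-\tfrac{1}{2n\sqrt{n}})$ is handled through $\E U_n^2=O(n^2)$. All of these bounds are $o(\sqrt{k/n})$.

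The main obstacle will be piece (vi). The single term $|R_n|$ is easy: taking $t=1$ in \eqref{SecondCondOnRn} gives $\P(|R_n|>\epsilon/\sqrt{n})=o(1/\sqrt{n})\subseteq o(\sqrt{k/n})$. The delicate point is $\max_{0\le\ell\le k} |R_{n+\ell}|$, because a naive union bound with only the weak condition \eqref{Rm weakly small} yields $\sum_{\ell=0}^k o(1/\sqrt{n+\ell}) = o(k/\sqrt{n})$, which is a factor of $\sqrt{k}$ too large when $k$ is unbounded. This is precisely where the strengthened assumption \eqref{SecondCondOnRn} earns its keep. Writing $\epsilon\sqrt{k/n}=t_\ell\epsilon/\sqrt{n+\ell}$ with $t_\ell:=\sqrt{k(n+\ell)/n}\ge 1$, \eqref{SecondCondOnRn} supplies $\P(|R_{n+\ell}|>\epsilon\sqrt{k/n})\le \psi_\epsilon(n+\ell)/t_\ell$ for a function $\psi_\epsilon$ with $\sqrt{m}\,\psi_\epsilon(m)\to 0$. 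Summing over $\ell\in\{0,\ldots,k\}$ and using $t_\ell\ge\sqrt{k}$ produces a union bound of order $\eta(n)\,\sqrt{k/n}$ with $\eta(n)\to 0$, which is the desired $o(\sqrt{k/n})$. Collecting the estimates for (i)--(vi) then completes the proof of \eqref{Strong expansion}.
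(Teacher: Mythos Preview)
Your decomposition and your treatment of pieces (i), (ii) and (vi) match the paper's proof essentially step for step, including the key observation that the strengthened remainder condition \eqref{SecondCondOnRn} is exactly what absorbs the extra factor $\sqrt{k}$ in the union bound for $\max_\ell |R_{n+\ell}|$.

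There is, however, a genuine gap in your handling of pieces (iii)--(v): you silently assume second moments that the lemma does not provide. The hypotheses of Lemma~\ref{Main Lemma} give only $\E f_1(X_1)=0$ (hence $f_1\in L^1$) via \eqref{centeredness etc}, and only $\E|f_2(X_1,X_2)|^{3/2+\delta}<\infty$ via \eqref{lower moments finite}. Your Chebyshev step ``$\E|\sum_1^n f_1|^2=O(n)$'' and your Doob-$L^2$ step for (iv) both need $\E f_1(X_1)^2<\infty$, and your variance calculations ``$\E\max_\ell V^2=O(kn\,\E f_2^2)$'' etc.\ need $\E f_2(X_1,X_2)^2<\infty$; none of these is assumed. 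The paper closes these gaps as follows. For $f_1$ it works entirely in $L^1$: Markov's inequality plus the $L^1$ law of large numbers handles the pre-$n$ part, and Doob's $L^1$ submartingale inequality applied to $(|\sum_{i=1}^\ell f_1(X_i)|)_\ell$, again combined with the $L^1$ law of large numbers, handles the post-$n$ maximum. For $f_2$ it works in $L^p$ with $p:=(3/2+\delta)\wedge 2$: the observation that the relevant $U$-statistic partial sums form a martingale is combined with a von Bahr--Esseen type bound (the paper's Lemma~\ref{Lem vBE for U}) giving $\E|M|^p\le 4\sum\E|f_2|^p$, after which Doob's $L^p$ inequality and Markov finish the job. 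The fact that $p>3/2$ (rather than $p\ge 2$) is precisely what is needed for the resulting exponents to yield $o(\sqrt{k/n})$.
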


\begin{proof} Let $\epsilon >0$.
We have for $n,k\in \N$
\[
U &:=& \text{L.H.S.\eqref{Strong expansion}} \\
&\le&
 \max_{m=n}^{n+k}\big|(\frac1{\sqrt{m}}- 
      \frac1{\sqrt{n}})\sum_{i=1}^mf_0(X_i)\big| 
 \,+\, \max_{m=n}^{n+k}\big| (\frac1{\sqrt{m}}-\frac1{\sqrt{n}})\mu_1 \big| \\
&& \,+\,\max_{m=n}^{n+k}\big| \frac 1m\sum_{i=1}^mf_1(X_i) - 
 \frac 1n\sum_{i=1}^n f_1(X_i) \big| \\
&& \,+\, \max_{m=n}^{n+k}\big| 
 \frac 1{m^{3/2}}
\sum_{j=2}^{m}\sum_{i=1}^{j-1}
 f_2(X_i,X_j) -  \frac 1{n^{3/2}}
\sum_{j=2}^{n}\sum_{i=1}^{j-1} f_2(X_i,X_j)  \big|   \\ 
&&  \,+\, \max_{m=n}^{n+k}\big| R_m -R_n \big| \\
&=:& U_1 + U_2 + U_3 + U_4 + U_5 
\]
For $\alpha\in \R$, the elementary inequality
\[
  (1-x)^\alpha &\ge& 1-(\alpha \vee 1 )\, x \qquad\quad(x\in[0,1[) 
\] 
applied to $x=k/(n+k)$ yields
\la  \label{AuxInq n k alpha}\quad
  \frac 1{n_{}^\alpha} -\frac 1 {(n+k)^\alpha} 
 &=& \frac  { 1-(1-\frac k{n+k})_{}^\alpha}{ {n_{}^\alpha}}
 \,\, \le \,\,
\frac {(\alpha \vee 1 )\,k}{n_{}^\alpha\, (n+k)} 
  \,\, \le\,\,  \frac {(\alpha \vee 1 )\,k}{n_{}^{1+\alpha} } 
\al

Investigation of $U_1$: By \eqref{AuxInq n k alpha} with $\alpha=1/2$,
\[
  U_1 &\le &  \frac{k}{\sqrt{n}\,(n+k) } 
   \max_{m=n}^{n+k}\big|\sum_{i=1}^mf_0(X_i)\big| 
\]
and by Kolmogorov's inequality, see \cite[p.~61]{Durrett.2005},
\[
 \P( U_1 > \epsilon \sqrt{ \frac kn}) 
  &\le& \epsilon^{-2}\, \E\big(f_0(X_1)\big)^2\, \frac {k}{n+k}
 \,\, =\,\, o\Big(\, \sqrt{\frac kn}\,\Big) 
   \qquad\quad( \frac kn \rightarrow 0)
\]

Investigation of $U_2$: Again by  \eqref{AuxInq n k alpha}
with $\alpha=1/2$, we get
\[
 U_2  &\le&  \frac{|\mu_1|k}{\sqrt{n}(n+k)} 
 \,\,= \,\,  o\Big(\, \sqrt{\frac kn}\,\Big)  \qquad\quad (n\rightarrow \infty)
\]

Investigation of $U_3$:
By \eqref{AuxInq n k alpha} with $\alpha=1$, we get 
\[
 U_3 
  &=&\max_{m=n}^{n+k} \Big|(\frac 1m-\frac 1n) \sum_{i=1}^n f_1(X_i) 
               \,+\,  \frac 1m  \sum_{i=n+1}^m f_1(X_i)     \Big| \\
  &\le&  \frac k{n^2} 
   \Big|\sum_{i=1}^n f_1(X_i)\Big| 
   \,\,+\,\, \frac 1n \max_{m=n}^{n+k} \Big|  \sum_{i=n+1}^m f_1(X_i)  \Big|\\
  &=:& U_{3,1} + U_{3,2}
\]
By Markov's inequality and the L$^1$-law of large numbers,
see \cite[p.~337]{Durrett.2005},
\[
 \P\Big( |U_{3,1}| > \epsilon\sqrt{\frac kn}\,\Big)   
 &\le&\frac 1{\epsilon}\sqrt{\frac kn}\, \E\big|\frac 1n \sum_{i=1}^n f_1(X_i)\big| 
 \,\, =\,\,o\Big(\, \sqrt{\frac kn}\,\Big) 
 \qquad (n\rightarrow\infty)
\]
By Doob's inequality applied to the submartingale 
$(|\sum_{i=1}^\ell f_1(X_i)| \,:\,
\ell\in\{0,\ldots,k\})$, see \cite[p.~247]{Durrett.2005},
and recalling our indicator notation 
$(\text{statement}) := 1\text{ or }0$ according to whether
``statement'' is true or false,  
\la \nonumber
 \P\Big( |U_{3,2}| > \epsilon \sqrt{ \frac kn } \,\Big) 
  &=&  
 \P\big( \max_{\ell=1}^k  \big|\sum_{i=1}^\ell f_1(X_i)\big|
   > \epsilon\sqrt{nk}\,\big) \\
 &\le&   \label{U32}
    \frac 1\epsilon \sqrt{\frac kn}\,
     \E\Big( \big| \frac 1k \sum_{i=1}^k f_1(X_i)\big|
   \cdot\big(  |\max_{\ell=1}^k \sum_{i=1}^\ell f_1(X_i)|> \epsilon
    \sqrt{nk} \,   \big)\Big)  \\
 &=&\nonumber  o\Big(\, \sqrt{\frac kn}\,\Big) \qquad\quad (n\rightarrow\infty)
\al
where for the last step, given $\delta>0$, we choose $k_0$ according
to the L$^1$-law of large numbers such that 
$\E \big| \frac 1k \sum_{i=1}^k f_1(X_i)\big| <\delta/\epsilon$
for $k>k_0$, and then $n_0$ such that for $k\le k_0$ and $n\ge n_0$
the expectation in  line \eqref{U32} is $ <$ $\delta/\epsilon$.

Investigation of $U_4$: By \eqref{AuxInq n k alpha} with $\alpha=3/2$,
we get 
\[
 U_4 &=& \max_{m=n}^{n+k}\Big| (\frac 1{m_{}^{3/2}}- \frac1{n_{}^{3/2}})  
\sum_{j=2}^{n}\sum_{i=1}^{j-1}    f_2(X_i,X_j) 
 \,+\, \frac 1{m_{}^{3/2}}\sum_{j=n+1}^m\sum_{i=1}^{j-1} f_2(X_i,X_j) \Big|\\
 &\le& \frac{3k}{2\,n_{}^{3/2}(n+k)} \Big|\sum_{j=2}^{n}\sum_{i=1}^{j-1}
   f_2(X_i,X_j)\Big|
   \,\,+\,\,\frac 1{n_{}^{3/2}}\max_{m=n}^{n+k}\Big|\sum_{j=n+1}^m\sum_{i=1}^{j-1} 
    f_2(X_i,X_j)\Big|\\
&=:&   U_{4,1} \,+\, U_{4,2} 
\]
Let $p:= (\frac 32 +\delta)\wedge 2$ for some $\delta$ from 
\eqref{lower moments finite}. With 
$c_1 :=  2\,(\frac 3{2\epsilon})^p \E|f_2(x_1,X_2)|^p <\infty$,
Markov's inequality and inequality \eqref{vBE for U}
from Lemma \ref{Lem vBE for U} below applied to $f_{ij} := f_2$ yield
\[
 \P\Big( |U_{4,1}| >\epsilon  \sqrt{\frac kn}\,      \Big)
 &\le& \big(\frac 3{2\epsilon} \big)^p_{}k^{p/2}n^{-p}(n+k)^{-p} 
    \,\E \big|\sum_{j=2}^{n}\sum_{i=1}^{j-1} f_2(X_i,X_j)  \big|_{}^p \\
 &\le & c_1\, k^{p/2}n^{2-p}(n+k)^{-p}   \\ 
 &=& c_1 \sqrt{\frac kn}\, k^{(p-1)/2} n^{5/2-p}(n+k)^{-p}  \\
 &\le&   c_1 \sqrt{\frac kn}\, n^{2-3p/2 } 
    \quad\quad[\text{by $k\le n+k$ and $n+k\ge
   n$}] \\
 &=& o\Big(\, \sqrt{\frac kn}\,\Big)   \qquad\quad \quad (n\rightarrow \infty)
\]
since $p>4/3$. To bound $U_{4,2}$, we again use Lemma \ref{Lem vBE for U},
but with $n+k$ in place of $n$ and with $f_{ij} := f_2$ for $j>n$
and $f_{ij} := 0$ for $j\le n$, to see that 
\[
 M_m &:=& \sum_{j=n+1}^m\sum_{i=1}^{j-1} f_2(X_i,X_j) 
      \qquad (m\in\{n+1,\ldots,n+k\})
\]
defines a martingal. Hence  Doob's inequality, \eqref{vBE for U},  
$c_2 := 4 \epsilon^{-p}  \E|f_2(x_1,X_2)|^p <\infty $, and $p>3/2$ yield 
\[
 \P\Big( |U_{4,2}| > \epsilon  \sqrt{\frac kn}\,     \Big)
 &\le& \P( \frac 1{n^{3/2}} \max_{m=n+1}^{n+ k}|M_m|>\epsilon\sqrt{\frac kn})\\
 &\le& (\epsilon n \sqrt{k})^{-p} \,\E  |M_{n+k}|^p \\
 &\le &    (\epsilon n \sqrt{k})^{-p}  
  4 \sum_{j=n+1}^{n+k}\sum_{i=1}^{j-1} \E |f_2(X_i,X_j)|^p  \\
 &\le& c_2 \cdot (n \sqrt{k})^{-p}k (n+k) \\   
 &=& c_2  \sqrt{\frac kn} \Big( k^{\frac{1-p}2}_{}n^{\frac{3}{2}-p}_{}
        +  k^{\frac {3 -p}2}_{}n^{\frac{1}{2}-p}_{}  \Big) \\
 &\le& c_2 \sqrt{\frac kn} \big( n^{\frac{3}{2}-p}_{}+\frac{k^{\frac34}}{n}\big)\\
 &=& o\Big(\sqrt{\frac kn}\,\Big)  \qquad\quad (\frac kn\rightarrow 0) 
\]

Investigation of $U_5$: Using \eqref{Rm strongly small}
with $t=\sqrt{k}$, we get 
\la                          \label{Bounding U_5}
 \P\Big( |U_5| > 2\epsilon  \sqrt{\frac kn} \, \Big) 
    &\le& \sum_{m=n}^{n+k}\P\Big(|R_m|>\epsilon \sqrt{\frac km} \,\Big) \\
 &=& \frac1{\sqrt{k}} \sum_{m=n}^{n+k}\sqrt{k} 
   \P\Big(|R_m|>\epsilon \sqrt{\frac km} \,\Big) \nonumber \\
 &=& \frac{k+1}{\sqrt{k}} \,o(\frac 1{\sqrt{n}})\nonumber\\
 &=& o\Big( \sqrt{\frac kn}\, \Big)   \qquad \quad (n\rightarrow \infty) \nonumber
\al

Combining the results for $U_1,\ldots,U_5$, we get 
\[
 P( |U|>  8 \epsilon\sqrt{k/n}\big) = o\Big(   \sqrt{\frac kn}\,\Big)
   \qquad \quad ( \frac kn \rightarrow 0)
\]%
\end{proof} 
The following lemma, which we have just used above when handling $U_4$,
is in principle  well known, see for example 
Koroljuk and Borovskich' book  
\cite[p.~72, Theorem 2.1.3, the case $r=c=2$]{Koroljuk.Bor.1994}
for the special case where the $f_{ij}$ are symmetric and independent
of $(i,j)$.
\begin{lem}\label{Lem vBE for U}
Let $X_1,\ldots,X_n$ be  independent $\cX$-valued
random variables and let $f_{ij}:\cX^2\rightarrow \R$ be measurable
with $\E |f_{ij}(X_i,X_j)| <\infty$ and with
$ \E \big(f_{ij}(X_i,X_j) {\pmb|}X_i \big)=0 $
for $1\le i<j\le n$. Then 
\[
  M_m &:=& \sum_{j=2}^m\sum_{i=1}^{j-1} f_{ij}(X_i,X_j) \qquad \quad
   (m\in \{2,\ldots,n \})
\]
defines a martingale, and for $p\in [1,2]$ we  have 
\la   \label{vBE for U}
   \E  |M_n|^p &\le & 4\,\sum_{j=2}^n\sum_{i=1}^{j-1} \E |f_{ij}(X_i,X_j)|^p
\al
\end{lem}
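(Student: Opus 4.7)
The overall plan is to prove \eqref{vBE for U} by a two-fold application of the von Bahr--Esseen inequality: once at the level of the martingale $(M_m)$, and once, conditionally on $X_m$, at the level of the sum that defines each martingale difference $d_m := M_m - M_{m-1} = \sum_{i=1}^{m-1} f_{im}(X_i,X_m)$. Each application contributes a factor $2$, which matches the constant $4 = 2\cdot 2$ in \eqref{vBE for U}.

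First I would verify the martingale assertion. Setting $\cA_m := \sigma(X_1,\ldots,X_m)$, the independence of $X_m$ from $\cA_{m-1}$ together with Fubini yields $\E(f_{im}(X_i,X_m)\mid \cA_{m-1}) = g_{im}(X_i)$, where $g_{im}(x) := \E f_{im}(x,X_m)$. The stated hypothesis $\E(f_{im}(X_i,X_m)\mid X_i)=0$ is precisely $g_{im}\equiv 0$ a.s., so $(M_m)_{m\ge 2}$ is a martingale with respect to $(\cA_m)$.

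Next I would invoke the martingale form of the von Bahr--Esseen inequality for $p\in[1,2]$ to obtain $\E|M_n|^p \le 2\sum_{m=2}^n \E|d_m|^p$; if a direct citation is to be avoided, a short induction on $n$ using $|a+b|^p\le |a|^p+|b|^p$ for $p\in[1,2]$ together with the tower property supplies it. To bound each $\E|d_m|^p$, I would condition on $X_m$: the summands $f_{im}(X_i,X_m)$, $1\le i<m$, are then independent as measurable functions of the independent $X_1,\ldots,X_{m-1}$, and the classical vBE inequality for independent mean-zero random variables gives $\E(|d_m|^p\mid X_m)\le 2\sum_{i=1}^{m-1}\E(|f_{im}(X_i,X_m)|^p\mid X_m)$ a.s. Taking expectations and combining with the outer bound yields \eqref{vBE for U}.

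The main obstacle is the inner step: the conditional vBE requires the summands to be mean-zero \emph{given $X_m$}, i.e.\ centering in the second argument, whereas the lemma as worded only explicitly assumes first-argument centering. In the two applications made in the proof of Lemma~\ref{Main Lemma}, however, each $f_{ij}$ is the symmetric kernel $f_2$ from \eqref{StochasticExpansion}, which by symmetry and \eqref{centeredness etc} satisfies both centerings, matching the canonical-kernel framework of the cited special case of \cite{Koroljuk.Bor.1994}. I would therefore carry out the proof under the implicit additional assumption $\E(f_{ij}(X_i,X_j)\mid X_j)=0$, as apparently intended; a counterexample (e.g.\ $f_{ij}(X_i,X_j)=g(X_j)$ for mean-zero $g$) shows that this assumption cannot be dropped.
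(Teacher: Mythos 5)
Your proof is correct and follows essentially the same route as the paper's: the constant $4$ comes from applying the von Bahr--Esseen inequality \cite[Theorem~2]{vBE.1965} twice, once across the outer index $j$ and once across the inner index $i$. The only cosmetic difference is in the inner step: the paper phrases it unconditionally, asserting that for each fixed $j$ the partial sums $\sum_{i=1}^{m} f_{ij}(X_i,X_j)$, $m<j$, form a martingale (with $X_j$ adjoined to the filtration), while you condition on $X_j$ and invoke the independent-summands case; these are the same estimate in different clothing.

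Your critical remark is also correct, and it applies verbatim to the paper's own proof, not just to yours. The inner application --- in either formulation --- needs the second-argument centering $\E\big(f_{ij}(X_i,X_j)\,\big|\,X_j\big)=0$: the increment $f_{mj}(X_m,X_j)$ has conditional expectation $\E\big(f_{mj}(X_m,X_j)\,\big|\,X_j\big)$ given $\sigma(X_j,X_1,\ldots,X_{m-1})$, and the stated hypothesis does not force this to vanish. Your counterexample settles the point: with $f_{ij}(x,y):=g(y)$, $\E g(X_1)=0$, $\E g(X_1)^2=1$, the hypotheses of the lemma hold (and $(M_m)$ is indeed still a martingale), yet for $p=2$ one has $\E|M_n|^2=\sum_{j=2}^n(j-1)^2$, which exceeds $4\sum_{j=2}^n(j-1)$ as soon as $n\ge 7$, so \eqref{vBE for U} as literally stated is false. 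The lemma must therefore be read with the additional centering assumption, which is harmless for its use in the paper: in both applications inside the proof of Lemma~\ref{Main Lemma}, each $f_{ij}$ is either $0$ or the symmetric kernel $f_2$, and \eqref{centeredness etc} then gives both centerings. One small caveat on your aside: the pointwise inequality $|a+b|^p\le|a|^p+|b|^p$ fails for $p>1$ (take $a=b=1$), so it cannot replace the citation; the correct elementary step is von Bahr and Esseen's $\E|S+d|^p\le\E|S|^p+2\,\E|d|^p$ for $\E(d\,|\,S)=0$. Since your argument rests on the citation and not on this aside, your proof stands.
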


\begin{proof} Clearly $(M_m:m\in\{2,\ldots,n\})$ is a martingale
with respect to the $\sigma$-algebras $\sigma(X_1,\ldots,X_m)$,
and so is $(\sum_{i=1}^{m}f_{ij}(X_i,X_j) :  m\in\{2,\ldots,j-1\})$  
for every $j$.  Applying twice the inequality
of von Bahr and Esseen \cite[Theorem~2]{vBE.1965}   yields 
\[
    \E  |M_n|^p&\le&  
  2\,\sum_{j=2}^n  \E \Big|\sum_{i=1}^{j-1} f_{ij}(X_i,X_j)\Big|^p 
  \,\,\le\,\, 2\,\sum_{j=2}^n 2\,\sum_{i=1}^{j-1}   \E |f_{ij}(X_i,X_j)|^p
\]
\end{proof}

\subsection{Proof of Theorem \ref{Main result}} \label{Main proof.}
Lemma \ref{Main Lemma} combined with the elementary inequality  
$|\max a_m -\max b_m| \le \max |b_m -a_m|$ yields  
\la                \label{MainReductionSecondCase}
&& \max_{m=n}^{n+k} T_m  \\
\nonumber  &=& T_n +\frac 1{\sqrt{n}} \max_{m=n}^{n+k} \sum_{i=n+1}^mf_0(X_i)
      + \sqrt{\frac kn} \epsilon^{}_{\P}( \sqrt{\frac kn})  
  \qquad\quad (
      \frac kn\rightarrow 0)
\al
Let $\epsilon \in {]0,1]}$.   
In this proof, $c_1,c_2,c_3 \in \,]0,\infty[$ 
and implied constants in $O(\ldots)$-statements
do  not depend on $n,k,\epsilon$, 
but may  depend on $\alpha$, the law of $X_1$, and the sequence
$(T_n)$.    
Using first \eqref{MainReductionSecondCase} 
and then the independence and stationarity of the sequence
$(X_i)$, we get 
\la          \label{1-alpha n k}
 && 1-\alpha_{n,k}  \\ 
 \nonumber &=& \P(\max_{m=n}^{n+k} T_m \le 0) \\
\nonumber &\begin{array}{c}\le\\ \ge\end{array} & 
  \P\Big(\, T_n \le -  \frac 1{\sqrt{n}} \max_{m=n}^{n+k} \sum_{i=n+1}^mf_0(X_i)
                     \pm \epsilon\,\sqrt{\frac kn} \,\,\Big)  
    \,+\, o\Big( \sqrt{\frac kn}\,\Big)     \\
\nonumber &=& \int_\R \P( T_n \le y) \d Q_{n,k}(y)  
  \,+\, o\Big( \sqrt{\frac kn}\,\Big) 
\al
for $ k/n\rightarrow 0$,
with $Q_{n,k}$ denoting the law of 
\[
 Y_{n,k} &:=& -  \frac 1{\sqrt{n}} \max_{\ell=0 }^{k} S_\ell
                     \,\pm\, \epsilon\,\sqrt{\frac kn} 
 \qquad \quad (n,k\in \N)
\]
where
\[
  S_\ell &:=& \sum_{i=1}^\ell f_0(X_i)  \qquad\quad(\ell\in \N_0)
\]
Since the $f_0(X_i)$ are i.i.d., we can apply a result 
of Kac, see   \cite[Theorem 4.1]{Kac.1954}
and also \cite[ p.~330]{Spitzer.1956}, to get 
\la        \label{KacFormulaApplied}
 \E Y_{n,k} &=& -\frac 1{\sqrt{n}} \sum_{\ell=1}^k\frac 1{\ell}
              \E (S_\ell)_+      \,\pm\,\epsilon\,\sqrt{\frac kn} 
\al

Since $-Y_{n,k} = \max_{\ell =0}^k(\frac 1{\sqrt{n}} 
S_\ell\mp\epsilon\,\sqrt{k/n} ) $
is the maximum of a martingale, the usual 
 L$^2$  maximum inequality, see e.g. 
\cite[p.~248]{Durrett.2005}, yields
\la           \label{Ynk small 1}
\E Y_{n,k}^2 &\le& 4\,\E \Big(\frac 1{\sqrt{n}}S_k 
   \mp\epsilon\,\sqrt{\frac kn}  \Big)^2
 \,\, =\,\, 4(1 + \epsilon^2)\frac kn
 \,\, \le \,\, c_1 \frac kn
\al
and hence in particular, by the Chebyshev and Lyapunov inequalities,
\la  \label{Ynk small 2}
  \P( |Y_{n,k}| \ge 1)  \,\, \le \,\,  c_1 \frac kn, & &
| \E Y_{n,k}|  \,\, \le \,\, \sqrt{c_1}\sqrt{\frac kn} 
\al

An application of \cite[p.~343, Corollary 10.3.8, 
with $S_n= \mu(P) +T_n/\sqrt{n}$, 
$P$ the law of $X_1$,  $\beta=1-\alpha$, $\mu(P)=0$, 
$\sigma(P)=1$, $N_\beta=\Phi^{-1}(1-\alpha)$, $g_1=\overline{g}_1=0$, 
$B_0(g_1)=B_0(0)=0$, $P(f_0(\cdot,P)g_1)=0$,
and with $U_n$ in place of $V_n$ according to 
Discussion \ref{Remark on assumptions}]{PW.II}   yields   
\la   \label{AsmpPfW.a}
 && \P(T_n \le y) \\
 \nonumber &=& F_n(y) \,+\,
  o\big( \frac1{\sqrt{n}}\big)
  \qquad\quad  (n\rightarrow\infty, \text{ locally uniformly in }y\in\R)
\al
with
\la          \label{AsmpPfW.b}
 F_n(y)  &:=&\Phi\Big(\,\Phi^{-1}(1-\alpha) +  y
      + \frac {ay + by^2} {\sqrt{n}} \,\Big) 
     \qquad\quad  (n\in\N,\, y\in \R)
\al
where  $a,b \in \R$ depend only on $P,f_0,f_1, f_2$.

Since the functions $\P(T_n\le \cdot)$ and $F_n$ are $[0,1]$-valued,
we get
\la           \label{DiffOfIntegrals}
 &&\left|\int_\R\P(T_n\le y)\d Q_{n,k}(y)-\int_\R F_n(y)\d Q_{n,k}(y)\right|\\
\nonumber &\le& \int_{[-1,1]}\left|\P(T_n\le y)-F_n(y)\right| \d Q_{n,k}(y) 
      \,+\, Q_{n,k}(\R\setminus [-1,1]) \\
\nonumber &=&  o\big( \frac1{\sqrt{n}}\big) \,+\, O(\frac kn)  
     \qquad\qquad  (n\rightarrow\infty) 
\qquad [\text{by \eqref{AsmpPfW.a} and  \eqref{Ynk small 2}}] \\
\nonumber&=& o\Big(\,\sqrt{\frac kn} \,\Big)\qquad\qquad
\qquad\quad(\frac kn\rightarrow 0)
\al

Using 
\[
 F^{}_n(0) = 1-\alpha,&& F_n'(0) = \phi(\Phi^{-1}(1-\alpha)) 
  \cdot \big( 1+ \frac {a}{\sqrt{n}}\big)  
\]
\[
  \|F_n'' \|^{}_\infty &:=& \sup_{y\in\R} |F_n''(y)| \,\,\le\,\, c_2
       \qquad \quad (n\in \N)
\]
with $c_2$ depending only on $a$ and $b$, a Taylor expansion 
of $F_n$ around zero yields
\la   \label{IntFdQ}
 && \int_\R F_n(y)\d Q_{n,k}(y)\\
 \nonumber 
&=&F_n(0) \,+\, F_n'(0) \E Y_{n,k}\,+\, O(\|F_n'' \|^{}_\infty \E Y^2_{n,k} )\\
\nonumber
&=& 1-\alpha \,+\, \phi(\Phi^{-1}(1-\alpha)) 
  \big( 1 + \frac a{\sqrt{n}}\big) \E Y_{n,k} 
  \,+\, O\big(\frac kn \big) \\
&=& \nonumber 1-\alpha \,-\, 
  \frac{\phi(\Phi^{-1}(1-\alpha))}{\sqrt{n}}\sum_{\ell=1}^k\frac 1\ell
      \E (S_\ell)^{}_+ 
        \,\pm\, c_3 \epsilon\,\sqrt{\frac kn}  
  \,+ \,  O\big(\frac kn \big)
\al
using \eqref{Ynk small 1} and  \eqref{KacFormulaApplied}. 
Combining \eqref{Def alpha n k}, \eqref{1-alpha n k},
\eqref{DiffOfIntegrals} and \eqref{IntFdQ} yields \eqref{MainResultII}.

We have $\E (S_\ell/\sqrt{\ell})^{}_+ \rightarrow 1/\sqrt{2\pi}$
for $\ell \rightarrow \infty$,
by the uniform integrability of $(S_\ell/\sqrt{\ell})^{}_+$
following from $\E (S_\ell/\sqrt{\ell})^{2}_+\le 1$ and by the central limit
theorem, compare 
\label{could have used von Bahr}
\cite[Theorems~25.12 and 27.1]{Billingsley.1995}.
It follows that 
$\sum_{\ell =1}^k \frac 1\ell \E(S_\ell)^{}_+
 \sim \frac{1}{\sqrt{2\pi}} \sum_{\ell =1}^k\frac 1{\sqrt{\ell}}
\sim \frac{2}{\sqrt{2\pi}}\sqrt{k} $ for $k\rightarrow \infty$. 
Hence \eqref{MainResultII} yields \eqref{MainResultIII}. \Halmos

\subsection{Proof of Lemma \ref{Lemma on h}}\label{Proof on h}
We have
\la  \label{rephcom}
  h&=& \frac 1{\sqrt{2\pi}} \cdot
  \big(]0,\infty[\,\ni x\mapsto \frac 1x\big)
 \circ \frac {1-\Phi}{\phi} 
 \circ \big(]0,1[\,\ni \alpha\mapsto \Phi^{-1}(1-\alpha)\big)
\al
Since Mills' ratio 
$\R \ni x\mapsto \frac{1-\Phi}{\phi}(x)=\int_0^\infty \exp(-xt-t^2/2)\d t$
and the other two composition factors   in \eqref{rephcom}
are strictly decreasing, so is $h$.
Applying the  well-known asymptotics 
$ \frac{1-\Phi}{\phi}(x) \sim \frac 1x$ for $x\rightarrow \infty$
and $\Phi^{-1}(1-\alpha) \sim \sqrt{2\log(\frac 1\alpha)}$ for
$\alpha\rightarrow 0$ to \eqref{rephcom}, we get \eqref{asymp h}.
\Halmos

\subsection{Proof of Lemma \ref{Lemma on Rn}}\label{Proof on Rn}
By assumption 
$
  \lim_{y\rightarrow\infty}\sup_{n\ge n_0}  
 \E Y_n  (Y_n >y ) =0
$, and for  $t\ge 1$
\[
 t\sqrt{n}\, \P\Big(|R_n|>\frac{t\epsilon}{\sqrt{n}}\Big) 
 &\le& \epsilon^{-p}t^{1-p} n^{\frac{1+p}2}
 \E \Big( |R_n|^p(|R_n|>\frac{t\epsilon}{\sqrt{n}})\Big)  \\
 &\le& \epsilon^{-p}
\E\Big( Y_n\,( Y_n >\epsilon^p  \sqrt{n} ) \Big) 
\] 
\hfill\Halmos

\section*{Acknowledgements}
Thanks to Peter Hall for some helpful comments on a previous 
version of this paper.

\end{document}